\newtheorem{theorem}{Theorem}
\newtheorem{lemma}{Lemma}
\newtheorem{remark}{Remark}
\newtheorem{proposition}{Proposition}
\newtheorem{definition}{Definition}
\newtheorem{corollary}{Corollary}
\newcommand{\rd}{\mathrm{d}}
\newcommand{\bszero}{\boldsymbol{0}}
\newcommand{\bsc}{\boldsymbol{c}}
\newcommand{\bsk}{\boldsymbol{k}}
\newcommand{\bsl}{\boldsymbol{l}}
\newcommand{\bsr}{\boldsymbol{r}}
\newcommand{\bsx}{\boldsymbol{x}}
\newcommand{\bsy}{\boldsymbol{y}}
\newcommand{\bsw}{\boldsymbol{w}}
\newcommand{\bsalpha}{\boldsymbol{\alpha}}
\newcommand{\bsgamma}{\boldsymbol{\gamma}}
\newcommand{\bssigma}{\boldsymbol{\sigma}}
\newcommand{\FF}{\mathbb{F}}
\newcommand{\NN}{\mathbb{N}}
\newcommand{\RR}{\mathbb{R}}
\newcommand{\Hcal}{\mathcal{H}}
\newcommand{\Kcal}{\mathcal{K}}
\newcommand{\rms}{\mathrm{rms}}
\newcommand{\wor}{\mathrm{wor}}
\begin{document}

\title{Optimal order quasi-Monte Carlo integration in weighted Sobolev spaces of arbitrary smoothness
\thanks{The work of T.~G. is supported by JSPS Grant-in-Aid for Young Scientists No.15K20964.
The work of K.~S. and T.~Y. is supported by the Program for Leading Graduate Schools, MEXT, Japan and Australian Research Council's Discovery Projects funding scheme (project number DP150101770).
The work of K.~S. is also supported by Grant-in-Aid for JSPS Fellows No.15J05380.
}}

\author{Takashi Goda\thanks{Graduate School of Engineering, The University of Tokyo, 7-3-1 Hongo, Bunkyo-ku, Tokyo 113-8656, Japan (\tt{goda@frcer.t.u-tokyo.ac.jp})}, Kosuke Suzuki\thanks{School of Mathematics and Statistics, The University of New South Wales, Sydney 2052, Australia ({\tt kosuke.suzuki1@unsw.edu.au})}, Takehito Yoshiki\thanks{School of Mathematics and Statistics, The University of New South Wales, Sydney 2052, Australia ({\tt takehito.yoshiki1@unsw.edu.au})}}

\date{\today}

\maketitle

\begin{abstract}
We investigate quasi-Monte Carlo integration using higher order digital nets in weighted Sobolev spaces of arbitrary fixed smoothness $\alpha \in \NN$, $\alpha \ge 2$, defined over the $s$-dimensional unit cube. We prove that randomly digitally shifted order $\beta$ digital nets can achieve the convergence of the root mean square worst-case error of order $N^{-\alpha}(\log N)^{(s-1)/2}$ when $\beta \ge 2\alpha$. The exponent of the logarithmic term, i.e., $(s-1)/2$, is improved compared to the known result by Baldeaux and Dick, in which the exponent is $s\alpha /2$. Our result implies the existence of a digitally shifted order $\beta$ digital net achieving the convergence of the worst-case error of order $N^{-\alpha}(\log N)^{(s-1)/2}$, which matches a lower bound on the convergence rate of the worst-case error for any cubature rule using $N$ function evaluations and thus is best possible.
\end{abstract}
\emph{Keywords}:\; Quasi-Monte Carlo, numerical integration, higher order digital nets, Sobolev space\\
\emph{MSC classifications}:\; 65C05, 65D30, 65D32

\section{Introduction and the main result}
In this paper we investigate quasi-Monte Carlo (QMC) integration of functions defined over the $s$-dimensional unit cube. For an integrable function $f\colon [0,1)^s\to \RR$, we denote the true integral of $f$ by
\begin{align*}
I(f) := \int_{[0,1)^s}f(\bsx)\, \rd \bsx .
\end{align*}
QMC integration of $f$ over a finite point set $P\subset [0,1)^s$ approximates $I(f)$ by
\begin{align*}
I(f;P) := \frac{1}{|P|}\sum_{\bsx\in P}f(\bsx) .
\end{align*}
Here $P$ is a multiset and so if an element occurs multiple times it is counted according to its multiplicity. The key ingredient for success of QMC integration is to construct good point sets depending on a function class to which $f$ belongs. In the classical QMC theory, for instance, a class of functions with bounded variation in the sense of Hardy and Krause has been often considered \cite{KNbook,Nbook}. For this function class, the Koksma-Hlawka inequality states that the integration error is bounded by
\begin{align*}
|I(f;P)-I(f)| \le V_{\mathrm{HK}}(f)D^*(P) ,
\end{align*}
where $V_{\mathrm{HK}}(f)$ denotes the variation of $f$ in the sense of Hardy and Krause, and $D^*(P)$ the star-discrepancy of $P$, see \cite[Chapter~3]{Nbook}. This inequality motivates construction of low-discrepancy point sets. We refer to \cite[Chapter~8]{DPbook} for several explicit constructions of point sets whose star-discrepancy is of order $N^{-1}(\log N)^{s-1}$, where $N$ denotes the number of points, i.e., $N=|P|$.

One of the recent interest in the research community is to consider a function class consisting of smooth functions and to construct good point sets which achieve higher order convergence of the integration error in that function class, see for instance \cite{Dick08,HMOT15,Mark13,Teml03}. A function space of our particular interest in this paper is a weighted Sobolev space $\Hcal_{\alpha,\bsgamma}$ of arbitrary fixed smoothness $\alpha$ for a set of non-negative real numbers $\bsgamma=(\gamma_u)_{u\subseteq \{1,\ldots,s\}}$. Here $\alpha\geq 2$ is a positive integer. (We shall give the precise definition of $\Hcal_{\alpha,\bsgamma}$ in Subsection~\ref{subsec:sobolev}.) This space has been studied, for instance, in \cite{BD09,Dick09,DPbook} in the context of QMC integration. The breakthrough in this research direction was made by Dick and his collaborators \cite{BD09,BDP11,Dick07,Dick08,Dick09,DPbook}, who provide us with an explicit construction of good point sets called \emph{higher order digital nets} achieving almost optimal convergence of the integration error of order $N^{-\alpha}(\log N)^{c(s,\alpha)}$ for some $c(s,\alpha)>0$. (We shall give the definition of higher order digital nets in Subsection~\ref{subsec:ho_digital_net}.) The above order of convergence $\alpha$ is best possible up to some power of a $\log N$ factor.

A thorough analysis on the exponent $c(s,\alpha)$ has been recently done for periodic Sobolev spaces and periodic Nikol'skij-Besov spaces with dominating mixed smoothness in \cite{HMOT15}. They obtained $c(s,\alpha)=(s-1)/2$ for order 2 digital nets in the former spaces for instance. Although the result is best possible, there are restrictions that only periodic function spaces are taken into account and that the smoothness parameter $\alpha$, which equals $r$ in their notation and is considered to be a positive real number, should be less than 2. Thus, the question arises whether higher order digital nets can achieve the best possible convergence of the integration error in non-periodic function spaces of $\alpha \ge 2$. In this paper we give an affirmative answer to this question.

To state the main result of this paper, we introduce some notation here. Let $\NN$ be the set of positive integers and $\NN_0:=\NN\cup \{0\}$. Let $b$ be a prime and $\FF_b$ the finite field with $b$ elements, which is identified with the set $\{0,1,\ldots,b-1\}$ equipped with addition and multiplication modulo $b$. For $x\in [0,1)$, its $b$-adic expansion $x=\sum_{i=1}^{\infty}\xi_ib^{-i}$ with $\xi_i\in \FF_b$ is understood to be unique in the sense that infinitely many of the $\xi_i$'s are different from $b-1$. The operator $\oplus$ denotes digitwise addition modulo $b$, that is, for $x=\sum_{i=1}^{\infty}\xi_ib^{-i}\in [0,1),x'=\sum_{i=1}^{\infty}\xi'_ib^{-i}\in [0,1)$, we define 
\begin{align*}
x\oplus x':= \sum_{i=1}^{\infty}\frac{\eta_i}{b^i}\quad \text{with}\quad \eta_i=\xi_i+\xi'_i \pmod b.
\end{align*}
Note that $x\oplus x'$ is not always defined via its unique $b$-adic expansion and even may equal $1\notin [0,1)$.
Such an instance is given by setting $b=2$, $x=2^{-1}+2^{-3}+2^{-5}+\cdots$ and $x'=2^{-2}+2^{-4}+2^{-6}+\cdots$.
However, if either $x$ or $x'$ can be written in a finite $b$-adic expansion, this situation never occurs.

Moreover, let $V$ be a normed function space with norm $\lVert \cdot\rVert_V$. The worst-case error of QMC integration over $P$ in $V$ is defined as
\begin{align*}
e^{\wor}(V;P) := \sup_{\substack{f\in V\\ \lVert f\rVert_V\le 1}}|I(f;P)-I(f)|.
\end{align*}
For $\bssigma\in [0,1)^s$, we write $P\oplus \bssigma:=\{\bsx\oplus \bssigma: \bsx\in P\}$, where $\oplus$ is applied componentwise.
Since we shall only consider a point set $P$ whose each element $\bsx$ can be written in finite $b$-adic expansions in this paper, $\bsx\oplus \bssigma$ is always defined via unique $b$-adic expansions.
Then the root mean square (RMS) worst-case error of QMC integration over $P\oplus \bssigma$ in $V$ with respect to a randomly chosen $\bssigma\in [0,1)^s$ is defined as
\begin{align*}
e^{\rms\text{--}\wor}(V;P) := \left( \int_{[0,1)^s}\left( e^{\wor}(V;P\oplus \bssigma) \right)^2\, \rd \bssigma\right)^{1/2}.
\end{align*}

Now the main result of this paper is given as follows.
\begin{theorem}\label{thm:main_result}
For $\alpha,\beta,m\in \NN$ and $t\in \NN_0$ with $\alpha\ge 2$, $\beta \ge 2\alpha$ and $0\le t\le \beta m$, let $P$ be an order $\beta$ digital $(t,m,s)$-net over $\FF_b$.
Let $\bsgamma=(\gamma_u)_{u\subseteq \{1,\ldots,s\}}$ be a set of the weights.
The RMS worst-case error of QMC integration over $P\oplus \bssigma$ in $\Hcal_{\alpha,\bsgamma}$ with respect to a randomly chosen $\bssigma\in [0,1)^s$ is bounded by
\begin{align}\label{eq:main_result}
 e^{\rms\text{--}\wor}(\Hcal_{\alpha,\bsgamma};P) \le \frac{1}{N^{\alpha}}\sum_{\emptyset \ne u\subseteq \{1,\ldots,s\}}\gamma_u^{1/2} C_{\alpha,\beta,b,t,u}(\log N)^{(|u|-1)/2},
\end{align}
where $C_{\alpha,\beta,b,t,u}>0$ for all $\emptyset \ne u\subseteq \{1,\ldots,s\}$ and $N=|P|=b^m$.
\end{theorem}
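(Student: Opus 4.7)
The plan is to derive the squared RMS worst-case error as a diagonal sum of Walsh coefficients indexed by the dual net of $P$, and then to estimate it via a careful counting argument. Since $\Hcal_{\alpha,\bsgamma}$ is a reproducing kernel Hilbert space whose kernel factorises as $K_{\alpha,\bsgamma}(\bsx,\bsy)=\sum_{u\subseteq\{1,\ldots,s\}}\gamma_u\prod_{j\in u}K_\alpha(x_j,y_j)$ with $K_\alpha$ a zero-mean univariate Sobolev kernel of smoothness $\alpha$, I would first expand $K_\alpha$ as a Walsh double series $K_\alpha(x,y)=\sum_{k,l\in\NN_0}a_\alpha(k,l)\wal_k(x)\overline{\wal_l(y)}$ and substitute into the standard RKHS identity for $[e^{\wor}(\Hcal_{\alpha,\bsgamma};P\oplus\bssigma)]^2$. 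Averaging over the shift $\bssigma$ and using the orthogonality $\int_{[0,1)}\wal_k(z\oplus\sigma)\overline{\wal_l(z\oplus\sigma)}\,\rd\sigma=\delta_{k,l}$ kills all off-diagonal Walsh terms in the kernel, while the digital net property $(1/N)\sum_{\bsx\in P}\wal_{\bsk}(\bsx)=\mathbf{1}[\bsk\in P^{\perp}]$ collapses the expression to
\begin{align*}
[e^{\rms\text{--}\wor}(\Hcal_{\alpha,\bsgamma};P)]^2=\sum_{\emptyset\ne u\subseteq\{1,\ldots,s\}}\gamma_u\sum_{\substack{\bsk\in P^{\perp}\setminus\{\bszero\}\\ \mathrm{supp}(\bsk)=u}}\prod_{j\in u}a_\alpha(k_j,k_j).
\end{align*}
This ``diagonal'' expression, cleaner than the deterministic worst-case error which would involve $|a_\alpha(k,l)|$ for all index pairs, is the source of the logarithmic improvement over Baldeaux--Dick.

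Next, I would install the standard Walsh decay $0\le a_\alpha(k,k)\le C_\alpha b^{-2\mu_\alpha(k)}$ for $k\ge 1$, where $\mu_\alpha(k)$ denotes the Dick weight (the sum of positions of the top $\alpha$ non-zero $b$-adic digits of $k$), obtained by $\alpha$-fold integration-by-parts. The central task is then to bound, for each non-empty $u$,
\begin{align*}
S_u:=\sum_{\substack{\bsk\in P^{\perp}\setminus\{\bszero\}\\ \mathrm{supp}(\bsk)=u}}\prod_{j\in u}b^{-2\mu_\alpha(k_j)}\le \widetilde{C}_u\,\frac{(\log N)^{|u|-1}}{N^{2\alpha}}.
\end{align*}
Here I would invoke the dual-net propagation rule: every non-zero $\bsk\in P^{\perp}$ of an order $\beta$ digital $(t,m,s)$-net satisfies $\mu_\beta(\bsk)>\beta m-t$. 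Since $\beta\ge 2\alpha$, a direct digit-comparison yields $\mu_\alpha(\bsk)\ge(\alpha/\beta)\mu_\beta(\bsk)>\alpha m-\alpha t/\beta$, so the total level $L:=\sum_{j\in u}\mu_\alpha(k_j)$ is forced to be at least $\alpha m-O(t)$. Stratifying $S_u$ by the level vector $(\mu_\alpha(k_j))_{j\in u}$ and applying the standard Dick-type counting estimate for the number of $k$ with prescribed $\mu_\alpha(k)$, the number of compositions of a fixed $L$ into $|u|$ non-negative parts is $O(L^{|u|-1})$, and a geometric series in $L\ge\alpha m-O(t)$ then produces the claimed bound with exactly $|u|-1$ logarithmic powers.

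Combining the estimates gives $[e^{\rms\text{--}\wor}]^2\le\sum_{u\ne\emptyset}\gamma_u\widetilde{C}_u(\log N)^{|u|-1}/N^{2\alpha}$, and the elementary inequality $\sqrt{\sum_u a_u}\le\sum_u\sqrt{a_u}$ then yields the stated form of Theorem~\ref{thm:main_result}. The main obstacle will be the combinatorial counting for $S_u$: the tight exponent $|u|-1$ (rather than $|u|$) requires the propagation-rule constraint to absorb exactly one degree of freedom in the level-vector summation, and packaging this uniformly in $u$ demands the delicate bookkeeping developed in the higher-order net literature. A secondary technicality is that the Walsh decay bound $a_\alpha(k,k)\le C_\alpha b^{-2\mu_\alpha(k)}$ must be applied with different effective rates according to whether $k$ has fewer or more than $\alpha$ non-zero digits; handling both regimes uniformly so that $C_{\alpha,\beta,b,t,u}$ remains independent of $N$ is the technical crux, and the hypothesis $\beta\ge 2\alpha$ is precisely what makes the propagation-rule lower bound on $\mu_\beta(\bsk)$ translate into a lower bound on $\mu_\alpha(\bsk)$ of order $\alpha m$ rather than something weaker.
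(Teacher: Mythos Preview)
Your reduction to the diagonal Walsh sum $\sum_{\bsk_u\in P_u^\perp}b^{-2\mu_\alpha(\bsk_u)}$ is correct and matches the paper's starting point (their Lemma~\ref{lem:mse_Baldeaux_Dick}). The gap is in the combinatorial step. You use only the crude bound $\mu_\alpha(\bsk)\ge(\alpha/\beta)\mu_\beta(\bsk)>\alpha m-\alpha t/\beta$ and then propose to stratify by the $\mu_\alpha$-level vector. But once you discard the dual-net constraint beyond this single lower bound and sum $b^{-2\mu_\alpha(\bsk)}$ over all $\bsk\in\NN^{|u|}$ with $\mu_\alpha(\bsk)\ge L_0:=\alpha m-O(t)$, the ``Dick-type counting'' you invoke does not stay bounded: the number of $k$ with $\mu_\alpha(k)=l$ grows like $b^{l/\alpha}$ times a polynomial (the digits below the $\alpha$-th nonzero one are free), so the level-$l$ contribution decays only like $b^{-(2-1/\alpha)l}$. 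The tail from $L_0=\alpha m$ is then $\asymp b^{-(2-1/\alpha)\alpha m}=N^{-(2\alpha-1)}$, and you lose a full factor $N^{1/2}$ in the RMS error. Moreover, your argument never actually uses $\beta\ge 2\alpha$: the inequality $\mu_\alpha\ge(\alpha/\beta)\mu_\beta$ holds for every $\beta\ge\alpha$, so the role you assign to this hypothesis is misidentified.

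The paper's key new device is a sharper interpolation (Lemma~\ref{lem:mertic_inter}):
\[
\mu_\alpha(\bsk)\ \ge\ A_{\alpha\beta}\,\mu_\beta(\bsk)+B_{\alpha\beta}\,\mu_1(\bsk),\qquad A_{\alpha\beta}=\tfrac{\alpha-1}{\beta-1},\quad B_{\alpha\beta}=\tfrac{\beta-\alpha}{\beta-1},
\]
with the identity $\beta A_{\alpha\beta}+B_{\alpha\beta}=\alpha$. One bounds $b^{-2\mu_\alpha(\bsk_u)}\le b^{-2A_{\alpha\beta}\delta_\beta(P)}\,b^{-2B_{\alpha\beta}\mu_1(\bsk_u)}$ and \emph{keeps the dual-net constraint in the remaining $\mu_1$-sum}, where a tractable counting lemma is available: $|\{\bsk_u\in P_u^\perp:\mu_1(k_j)=l_j\ \forall j\}|\le(b-1)^{|u|}b^{\max(0,\,|\bsl_u|_1-\delta_1(P)-|u|+1)}$. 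After absorbing this $b^{|\bsl_u|_1}$ growth, the residual series $\sum_h b^{-(2B_{\alpha\beta}-1)h}\binom{h-1}{|u|-1}$ converges exactly when $B_{\alpha\beta}>\tfrac12$, i.e.\ when $\beta\ge 2\alpha$; the binomial coefficient is what produces the $(\log N)^{|u|-1}$. Finally the identity $\beta A_{\alpha\beta}+B_{\alpha\beta}=\alpha$ recombines $\delta_\beta(P)>\beta m-t$ and $\delta_1(P)>m-\lceil t/\beta\rceil$ into the clean factor $b^{-2\alpha m}=N^{-2\alpha}$. No analogue of this dual-net counting lemma is ``standard'' for $\mu_\alpha$ with $\alpha\ge 2$, which is precisely why the detour through $\mu_1$ via the interpolation inequality is the heart of the proof.
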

\noindent Note that the explicit form of $C_{\alpha,\beta,b,t,u}$ can be found later in (\ref{eq:constant_full}). This result directly implies the following.
\begin{corollary}\label{cor:existence}
For $\alpha,\beta,m\in \NN$ and $t\in \NN_0$ with $\alpha\ge 2$, $\beta \ge 2\alpha$ and $0\le t\le \beta m$, let $P$ be an order $\beta$ digital $(t,m,s)$-net over $\FF_b$.
Let $\bsgamma=(\gamma_u)_{u\subseteq \{1,\ldots,s\}}$ be a set of the weights.
There exists a $\bssigma\in [0,1)^s$ such that the worst-case error of QMC integration over $P\oplus \bssigma$ in $\Hcal_{\alpha,\bsgamma}$ is bounded by
\begin{align*}
 e^{\wor}(\Hcal_{\alpha,\bsgamma};P\oplus \bssigma) \le \frac{1}{N^{\alpha}}\sum_{\emptyset \ne u\subseteq \{1,\ldots,s\}}\gamma_u^{1/2} C_{\alpha,\beta,b,t,u}(\log N)^{(|u|-1)/2}.
\end{align*}
\end{corollary}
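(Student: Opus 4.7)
The plan is to derive Corollary~\ref{cor:existence} as an immediate consequence of Theorem~\ref{thm:main_result} via the standard mean-value (averaging) argument, so essentially no new technical work is required beyond invoking the main theorem. Let me denote by $B$ the right-hand side of the bound (\ref{eq:main_result}) in Theorem~\ref{thm:main_result}. By squaring the definition of the RMS worst-case error, Theorem~\ref{thm:main_result} is equivalent to
\begin{equation*}
\int_{[0,1)^s}\bigl(e^{\wor}(\Hcal_{\alpha,\bsgamma};P\oplus\bssigma)\bigr)^2\,\rd\bssigma \;\le\; B^2.
\end{equation*}

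Since the integrand is non-negative and its mean over the probability space $[0,1)^s$ is at most $B^2$, its essential infimum cannot exceed $B^2$. In particular, there must exist at least one $\bssigma\in[0,1)^s$ for which $(e^{\wor}(\Hcal_{\alpha,\bsgamma};P\oplus\bssigma))^2\le B^2$; taking square roots yields exactly the bound asserted in the corollary, with the same constants $C_{\alpha,\beta,b,t,u}$ inherited from Theorem~\ref{thm:main_result}.

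There is essentially no obstacle here: the heavy lifting is packaged entirely in the proof of Theorem~\ref{thm:main_result}, and the corollary is a purely measure-theoretic derandomization of that bound. The only point to note is that the map $\bssigma\mapsto e^{\wor}(\Hcal_{\alpha,\bsgamma};P\oplus\bssigma)$ must be measurable (and in particular that the set of good shifts has positive measure, not merely that one exists), but this is already implicit in the fact that the RMS quantity in Theorem~\ref{thm:main_result} is well-defined as a Lebesgue integral.
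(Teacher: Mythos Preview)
Your proposal is correct and matches the paper's approach: the paper simply states that Corollary~\ref{cor:existence} follows directly from Theorem~\ref{thm:main_result}, and the standard averaging argument you give is exactly the intended (and only reasonable) way to read that implication.
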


Although the $t$-value and thus the constants $C_{\alpha,\beta,b,t,u}$ may depend on $m$, it was shown by Dick \cite{Dick07,Dick08} that for large $m$ we can explicitly construct an order $\beta$ digital $(t,m,s)$-net with its $t$-value independent of $m$, see also Remark~\ref{rem:ho_digital_net}. Therefore, the rate of convergence which we obtain in this paper is of order $N^{-\alpha}(\log N)^{(s-1)/2}$ unless $\gamma_{\{1,\ldots,s\}}=0$. This compares favorably with what was obtained by Baldeaux and Dick in \cite[Theorem~24]{BD09}, where they considered the case where $P$ is an order $\alpha$ digital net over $\FF_b$, i.e., the case where the order of digital nets and the smoothness parameter coincide, and obtained a similar bound on the RMS worst-case error but with the exponent of the logarithmic term equal to  $s\alpha/2$. Our result shows that the exponent of the logarithmic term is actually independent of $\alpha$. Here we note that the convergence of order $N^{-\alpha}(\log N)^{(s-1)/2}$ in a similar function space has been proven by using the Frolov cubature rule in conjunction with periodization strategy, see for instance \cite{Ullrich14}, which is not a QMC integration rule though.

Moreover, from the results of \cite{DNP14}, we can see that the above results in Theorem~\ref{thm:main_result} and Corollary~\ref{cor:existence} are best possible. Now let $P=\{\bsx_0,\ldots,\bsx_{N-1}\}\subset [0,1)^s$ be an $N$ element point set and $\bsw=\{w_0,\ldots,w_{N-1}\}$ be an arbitrary real tuple. The worst-case error of cubature rule with points in $P$ and weights $\bsw$ is defined as
\begin{align*}
e^{\wor}(V;P,\bsw) := \sup_{\substack{f\in V\\ \lVert f\rVert_V\le 1}}\left| \sum_{n=0}^{N-1}w_nf(\bsx_n)-I(f)\right|.
\end{align*}
A lower bound on $e^{\wor}(V;P,\bsw)$ in the so-called half-period cosine space of smoothness $\alpha$ for the case of product weights, i.e., weights of the form $\gamma_u=\prod_{j\in u}\gamma_j$ for $\gamma_1,\ldots,\gamma_s>0$, was proven in \cite[Theorem~4]{DNP14}, and furthermore, it was shown in \cite[Theorem~1]{DNP14} that the half-period cosine space is continuously embedded in the Sobolev space $\Hcal_{\alpha,\bsgamma}$ which we consider in this paper. Combining these two results, we immediately have the following.
\begin{proposition}
Let $\alpha \ge 2$ be a positive integer and $\gamma_1,\ldots,\gamma_s>0$. For $u\subseteq \{1,\ldots,s\}$, let $\gamma_u=\prod_{j\in u}\gamma_j$ where the empty product equals 1. For any $N$ element point set $P\subset [0,1)^s$ and any real tuple $\bsw$ we have
\begin{align*}
 e^{\wor}(\Hcal_{\alpha,\bsgamma};P,\bsw) \ge c_{\alpha,\bsgamma,s}\frac{(\log N)^{(s-1)/2}}{N^{\alpha}},
\end{align*}
where $c_{\alpha,\bsgamma,s}$ is positive and independent of $P$ and $\bsw$.
\end{proposition}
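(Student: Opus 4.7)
The plan is a direct combination of the two results from [DNP14] referenced just above the statement. Let $\Kcal_{\alpha,\bsgamma}$ denote the half-period cosine space of smoothness $\alpha$ equipped with the same product weights $\bsgamma$. The proof reduces to transporting a known lower bound on $\Kcal_{\alpha,\bsgamma}$ to $\Hcal_{\alpha,\bsgamma}$ through a continuous embedding.

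First, I would invoke [DNP14, Theorem~4] to obtain a lower bound for arbitrary cubature rules in $\Kcal_{\alpha,\bsgamma}$: under the product-weight assumption there exists a positive constant $c'_{\alpha,\bsgamma,s}$, independent of $P$ and $\bsw$, such that
\begin{equation*}
e^{\wor}(\Kcal_{\alpha,\bsgamma};P,\bsw) \ge c'_{\alpha,\bsgamma,s}\frac{(\log N)^{(s-1)/2}}{N^{\alpha}}.
\end{equation*}
This is the nontrivial analytic input, and I take it for granted.

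Next, I would transfer the bound to $\Hcal_{\alpha,\bsgamma}$ using the continuous embedding from [DNP14, Theorem~1]. That theorem supplies a constant $C = C_{\alpha,\bsgamma,s}>0$ with $\lVert f\rVert_{\Hcal_{\alpha,\bsgamma}} \le C \lVert f\rVert_{\Kcal_{\alpha,\bsgamma}}$ for every $f \in \Kcal_{\alpha,\bsgamma}$. Hence the unit ball of $\Kcal_{\alpha,\bsgamma}$ is contained in the $\Hcal_{\alpha,\bsgamma}$-ball of radius $C$, and taking the supremum of the error functional $f \mapsto \sum_{n=0}^{N-1} w_n f(\bsx_n) - I(f)$ over these balls yields
\begin{equation*}
e^{\wor}(\Kcal_{\alpha,\bsgamma};P,\bsw) \le C\, e^{\wor}(\Hcal_{\alpha,\bsgamma};P,\bsw).
\end{equation*}
Chaining the two inequalities gives the claim with $c_{\alpha,\bsgamma,s} = c'_{\alpha,\bsgamma,s}/C$.

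There is essentially no obstacle to overcome, because both nontrivial ingredients are already in [DNP14]. The only point that warrants attention is the direction of the embedding: one must pair the norm inequality (cosine space into Sobolev space) with a lower bound on the \emph{smaller} space so that the inequality propagates in the correct direction, which is exactly the case here. It is also worth checking that the constants $c'_{\alpha,\bsgamma,s}$ and $C$ supplied by the two cited theorems depend only on the listed parameters, so that the final constant $c_{\alpha,\bsgamma,s}$ remains independent of $P$ and $\bsw$; this is automatic from the statements.
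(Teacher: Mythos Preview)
Your proposal is correct and matches the paper's approach exactly: the paper does not give a separate proof but states that the proposition follows immediately by combining \cite[Theorem~4]{DNP14} (the lower bound in the half-period cosine space) with \cite[Theorem~1]{DNP14} (the continuous embedding into $\Hcal_{\alpha,\bsgamma}$), which is precisely the chain you carry out. Your care about the direction of the embedding is well placed and correct.
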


This implies that the exponent $c(s,\alpha)$ cannot be less than $(s-1)/2$ in $\Hcal_{\alpha,\bsgamma}$ for the case of product weights with $\gamma_1,\ldots,\gamma_s>0$. Since Corollary~\ref{cor:existence} shows the existence of point sets which achieve exactly this order, our result is best possible. However, as our result is again an existence result and thus is not fully constructive, it is interesting to study an explicit construction of deterministic point sets which achieve the best possible convergence of the worst-case error in $\Hcal_{\alpha,\bsgamma}$. We leave it open for future work to address.

In the next section, we shall introduce the necessary background and notation such as weighted Sobolev spaces of smoothness $\alpha$ and higher order digital nets. In Section~\ref{sec:upper}, we shall prove Theorem~\ref{thm:main_result}, i.e., an upper bound on the RMS worst-case error of randomly digitally shifted order $\beta$ digital nets in $\Hcal_{\alpha,\bsgamma}$.
\section{Preliminaries}\label{sec:pre}

\subsection{Weighted Sobolev spaces}\label{subsec:sobolev}
First let us consider the one-dimensional unweighted case. The Sobolev space which we consider is given by
\begin{align*}
 \Hcal_{\alpha} & := \Big\{f \colon [0,1)\to \RR \mid f^{(r)} \colon \\
 & \qquad \quad \text{absolutely continuous for $r=0,\ldots,\alpha-1$}, f^{(\alpha)}\in L^2[0,1)\Big\},
\end{align*}
where $f^{(r)}$ denotes the $r$-th derivative of $f$. As in \cite[Section~10.2]{Wbook} this space is indeed a reproducing kernel Hilbert space with the reproducing kernel $\Kcal_{\alpha}\colon [0,1)\times [0,1)\to \RR$ and the inner product $\langle \cdot, \cdot \rangle_{\alpha}$ given as follows: 
\begin{align*}
 \Kcal_{\alpha}(x,y) = \sum_{r=0}^{\alpha}\frac{B_r(x)B_r(y)}{(r!)^2}+(-1)^{\alpha+1}\frac{B_{2\alpha}(|x-y|)}{(2\alpha)!} ,
\end{align*}
for $x,y\in [0,1)$, where $B_r$ denotes the Bernoulli polynomial of degree $r$, and
\begin{align*}
 \langle f, g \rangle_{\alpha} = \sum_{r=0}^{\alpha-1}\int_{0}^{1}f^{(r)}(x)\, \rd x \int_{0}^{1}g^{(r)}(x)\, \rd x + \int_{0}^{1}f^{(\alpha)}(x)g^{(\alpha)}(x)\, \rd x,
\end{align*}
for $f,g\in \Hcal_{\alpha}$.

Let us move on to the $s$-dimensional weighted case. In the following we write $\{1:n\}:=\{1,\ldots,n\}$ for $n\in \NN$.
Let $\bsgamma=(\gamma_u)_{u\subseteq \{1:s\}}$ be a set of non-negative real numbers which are called weights.
Note that the weights moderate the importance of different variables or groups of variables in function spaces and play an important role in the study of tractability \cite{SW98}.
However, such an investigation is out of the scope of this paper since we are interested in showing the optimal exponent of $\log N$ term in the error bound.
We consider the weighted function space for the sake of completeness.
Moreover, we shall use the following notation: For $v\subseteq \{1:s\}$ and $\bsx\in [0,1)^s$, let $\bsx_v=(x_j)_{j\in v}$. For $v\subseteq u\subseteq \{1:s\}$ and $\bsr_{u\setminus v}=(r_j)_{j\in u\setminus v}$, $(\bsr_{u\setminus v},\bsalpha_v,\bszero)$ denotes the $s$-dimensional vector whose $j$-th component is $r_j$ if $j\in u\setminus v$, $\alpha$ if $j\in v$, and $0$ otherwise. Now the weighted Sobolev space $\Hcal_{\alpha,\bsgamma}$ which we consider is the reproducing kernel Hilbert space whose reproducing kernel $\Kcal_{\alpha,\bsgamma}\colon [0,1)^s\times [0,1)^s\to \RR$ and inner product $\langle \cdot, \cdot \rangle_{\alpha,\bsgamma}$ are given as follows \cite{BD09}:
\begin{align*}
 \Kcal_{\alpha,\bsgamma}(\bsx,\bsy) = \sum_{u\subseteq \{1:s\}}\gamma_u \prod_{j\in u}\left\{\sum_{r=1}^{\alpha} \frac{B_r(x_j)B_r(y_j)}{(r!)^2}+(-1)^{\alpha+1}\frac{B_{2\alpha}(|x_j-y_j|)}{(2\alpha)!}\right\} ,
\end{align*}
for $\bsx=(x_1,\ldots,x_s),\bsy=(y_1,\ldots,y_s)\in [0,1)^s$, where the empty product always equals $1$, and
\begin{align*}
 \langle f, g \rangle_{\alpha,\bsgamma} & = \sum_{u\subseteq \{1:s\}}\gamma_u^{-1}\sum_{v\subseteq u}\sum_{\bsr_{u\setminus v}\in \{1:\alpha-1\}^{|u\setminus v|}} \\
 & \qquad \times \int_{[0,1)^{|v|}}\left(\int_{[0,1)^{s-|v|}}f^{(\bsr_{u\setminus v},\bsalpha_v,\bszero)}(\bsx)\, \rd \bsx_{\{1:s\}\setminus v}\right) \\
 & \qquad \quad \times \left(\int_{[0,1)^{s-|v|}} g^{(\bsr_{u\setminus v},\bsalpha_v,\bszero)}(\bsx) \, \rd \bsx_{\{1:s\}\setminus v}\right) \, \rd \bsx_v ,
\end{align*}
for $f,g\in \Hcal_{\alpha,\bsgamma}$, where for $u\subseteq \{1:s\}$ such that $\gamma_u=0$ we assume
\begin{align*}
 \sum_{v\subseteq u}\sum_{\bsr_{u\setminus v}\in \{1:\alpha-1\}^{|u\setminus v|}}& \int_{[0,1)^{|v|}} \left(\int_{[0,1)^{s-|v|}}f^{(\bsr_{u\setminus v},\bsalpha_v,\bszero)}(\bsx)\, \rd \bsx_{\{1:s\}\setminus v}\right) \\
 & \quad \times \left(\int_{[0,1)^{s-|v|}} g^{(\bsr_{u\setminus v},\bsalpha_v,\bszero)}(\bsx) \, \rd \bsx_{\{1:s\}\setminus v}\right) \, \rd \bsx_v = 0.
\end{align*}
Note that an integral and sum over the empty set is the identity operator and we formally set $0/0:=0$. 

\subsection{Higher order digital nets}\label{subsec:ho_digital_net}
Here we start with the general digital construction scheme of point sets as introduced by Niederreiter \cite{Nbook}.
\begin{definition}
For $m,n,s\in \NN$, let $C_1,\ldots,C_s\in \FF_b^{n\times m}$. Let $0\le h<b^m$ be an integer with its $b$-adic expansion $h=\sum_{i=0}^{m-1}\eta_i b^i$. For $1\le j\le s$, let us consider
\begin{align*}
 x_{h,j} = \frac{\xi_{1,h,j}}{b}+\frac{\xi_{2,h,j}}{b^2}+\cdots + \frac{\xi_{n,h,j}}{b^n} ,
\end{align*}
where $\xi_{1,h,j},\xi_{2,h,j},\ldots,\xi_{n,h,j}$ are given by
\begin{align*}
 (\xi_{1,h,j},\xi_{2,h,j},\ldots,\xi_{n,h,j})^{\top} = C_j (\eta_0,\eta_1,\ldots,\eta_{m-1})^{\top}.
\end{align*}
The set $P=\{\bsx_0,\bsx_1,\ldots,\bsx_{b^m-1}\}\subset [0,1)^s$ with $\bsx_h=(x_{h,1},\ldots,x_{h,s})$ is called a digital net over $\FF_b$ (with generating matrices $C_1,\ldots,C_s$).
\end{definition}
\noindent The dual net of $P$, denoted by $P^{\perp}$, is defined as follows.
\begin{definition}
For $m,n,s\in \NN$ and $C_1,\ldots,C_s\in \FF_b^{n\times m}$, let $P$ be a digital net over $\FF_b$ with generating matrices $C_1,\ldots,C_s$. The dual net of $P$ is defined as
\begin{align*}
 P^{\perp} := \{\bsk=(k_1,\ldots,k_s)\in \NN_0^s\colon C_1^{\top} \vec{k}_1\oplus \cdots \oplus C_s^{\top} \vec{k}_s = \bszero \in \FF_b^m\},
\end{align*}
where we set $\vec{k}:=(\kappa_0,\ldots,\kappa_{n-1})$ for $k\in \NN_0$ with its $b$-adic expansion $k=\kappa_0 +\kappa_1 b+\cdots $, which is actually a finite expansion.
\end{definition}

For $\alpha\in \NN$, we define a metric function $\mu_{\alpha}$ as follows.
\begin{definition}
Let $\alpha\in \NN$. For $k\in \NN$ with its $b$-adic expansion $k=\kappa_1b^{c_1-1}+\kappa_2b^{c_2-1}+\cdots+\kappa_vb^{c_v-1}$ such that $\kappa_1,\ldots,\kappa_v\in \{1,\ldots,b-1\}$ and $c_1>c_2>\cdots >c_v>0$. Then we define 
\begin{align*}
 \mu_{\alpha}(k):=\sum_{i=1}^{\min(\alpha,v)}c_i ,
\end{align*}
and $\mu_{\alpha}(0):=0$. For $\bsk=(k_1,\ldots,k_s)\in \NN_0^s$, we define
\begin{align*}
 \mu_{\alpha}(\bsk):=\sum_{j=1}^{s}\mu_{\alpha}(k_j).
\end{align*}
\end{definition}
\noindent Note that the above definition was originally given in \cite{Nied86,RT97} for the case $\alpha=1$ and in \cite{Dick07,Dick08} for $\alpha\ge 2$. We simply call $\mu_{\alpha}$ the \emph{Dick metric function} for any $\alpha \ge 1$ throughout this paper. Now we define the minimum Dick metric of a digital net, which shall play a critical role in the subsequent analysis.
\begin{definition}\label{def:minimum_Dick_metric}
Let $P$ be a digital net over $\FF_b$ and $P^{\perp}$ its dual net. For $\alpha\in \NN$, the minimum Dick metric of $P$ is defined as
\begin{align*}
 \delta_{\alpha}(P) := \min_{\bsk\in P^{\perp}\setminus \{\bszero\}}\mu_{\alpha}(\bsk). 
\end{align*}
\end{definition}

Now we give the definition of higher order digital nets.
\begin{definition}\label{def:ho_digital_net}
For $m,n,\alpha,s\in \NN$ with $n\ge \alpha m$, let $P$ be a digital net over $\FF_b$ with generating matrices $C_1,\ldots,C_s\in \FF_b^{n\times m}$. For $1\le i\le n$ and $1\le j\le s$, we denote by $\bsc_{i,j}\in \FF_b^m$ the $i$-th row vector of $C_j$. Let $t$ be an integer with $0\le t\le \alpha m$ which satisfies the following condition: For all $1\le i_{j,v_j}<\ldots < i_{j,1}\le n$ with 
\begin{align*}
 \sum_{j=1}^{s}\sum_{l=1}^{\min(\alpha,v_j)}i_{j,l}\le \alpha m -t,
\end{align*}
the vectors $\bsc_{i_{1,v_1},1},\ldots,\bsc_{i_{1,1},1}, \ldots, \bsc_{i_{s,v_s},s},\ldots,\bsc_{i_{s,1},s}$ are linearly independent over $\FF_b$. Then we call $P$ an order $\alpha$ digital $(t,m,s)$-net over $\FF_b$.
\end{definition}
\noindent The following property of order $\alpha$ digital $(t,m,s)$-nets directly follows from the linear independence of the rows of the generating matrices, that is, for any order $\alpha$ digital $(t,m,s)$-net $P$ over $\FF_b$, we have
\begin{align*}
 \delta_{\alpha}(P) > \alpha m -t. 
\end{align*}
Moreover, the following lemma is an obvious adaptation of the result shown in \cite[Theorem~3.3]{Dick07} and \cite[Theorem~4.10]{Dick08}, which states that any order $\alpha$ digital net is also an order $\alpha'$ digital net as long as $1\le \alpha' <\alpha$.
\begin{lemma}\label{lem:propagation}
For $\alpha\in \NN$, let $P$ be an order $\alpha$ digital $(t,m,s)$-net over $\FF_b$ with some integer $0\le t\le \alpha m$. Then, for any $\alpha' \in \NN$ with $1\le \alpha' <\alpha$, $P$ is also an order $\alpha'$ digital $(t_{\alpha'},m,s)$-net over $\FF_b$ with $t_{\alpha'}=\lceil t\alpha'/\alpha\rceil$.
\end{lemma}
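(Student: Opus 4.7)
The plan is to reduce the order-$\alpha'$ linear-independence requirement to the order-$\alpha$ one that $P$ already satisfies by hypothesis. First I would fix arbitrary row indices $1\le i_{j,v_j}<\ldots < i_{j,1}\le n$ for $j=1,\ldots,s$ obeying the condition
\begin{align*}
\sum_{j=1}^{s}\sum_{l=1}^{\min(\alpha',v_j)}i_{j,l}\le \alpha' m -t_{\alpha'},
\end{align*}
with $t_{\alpha'}=\lceil t\alpha'/\alpha\rceil$, and aim to deduce linear independence over $\FF_b$ of the collected row vectors $\bsc_{i_{j,l},j}$. By the defining property of an order $\alpha$ digital $(t,m,s)$-net, it suffices to establish the $\alpha$-version of the inequality, namely
\begin{align*}
\sum_{j=1}^{s}\sum_{l=1}^{\min(\alpha,v_j)}i_{j,l}\le \alpha m -t,
\end{align*}
after which the hypothesis on $P$ directly gives linear independence.

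The main ingredient is a per-coordinate averaging estimate. Since $i_{j,1}>\cdots >i_{j,v_j}$ is strictly decreasing and $\alpha'<\alpha$, the first $\min(\alpha',v_j)$ terms are always the largest among the first $\min(\alpha,v_j)$ terms, so the mean of the former dominates the mean of the latter. Splitting into the cases $v_j\le \alpha'$, $\alpha'<v_j\le \alpha$, and $v_j > \alpha$, this monotonicity translates into
\begin{align*}
\sum_{l=1}^{\min(\alpha,v_j)}i_{j,l} \le \frac{\alpha}{\alpha'}\sum_{l=1}^{\min(\alpha',v_j)}i_{j,l}.
\end{align*}
Summing over $j$, combining with the hypothesis, and using $t_{\alpha'}=\lceil t\alpha'/\alpha\rceil \ge t\alpha'/\alpha$ then yields
\begin{align*}
\sum_{j=1}^{s}\sum_{l=1}^{\min(\alpha,v_j)}i_{j,l} \le \frac{\alpha}{\alpha'}\bigl(\alpha' m - t_{\alpha'}\bigr) = \alpha m - \frac{\alpha t_{\alpha'}}{\alpha'} \le \alpha m - t,
\end{align*}
which is exactly the needed $\alpha$-condition.

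The only nontrivial step is the per-coordinate averaging inequality, and this is where I expect the main obstacle to lie; however, it reduces to the elementary fact that for a strictly decreasing sequence of positive integers the mean of the top $k$ terms is nonincreasing in $k$, together with a short case check to handle the $\min$ operators. Everything else is routine bookkeeping with the ceiling in the definition of $t_{\alpha'}$.
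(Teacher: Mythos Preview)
Your argument is correct. The per-coordinate averaging inequality
\[
\sum_{l=1}^{\min(\alpha,v_j)}i_{j,l}\le \frac{\alpha}{\alpha'}\sum_{l=1}^{\min(\alpha',v_j)}i_{j,l}
\]
indeed follows from the fact that the running mean of a strictly decreasing positive sequence is nonincreasing, and your three-case check handles the $\min$ operators cleanly; the final step using $t_{\alpha'}=\lceil t\alpha'/\alpha\rceil\ge t\alpha'/\alpha$ then closes the loop.

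As for comparison: the paper does not give its own proof of this lemma at all; it simply states that the result is an ``obvious adaptation'' of \cite[Theorem~3.3]{Dick07} and \cite[Theorem~4.10]{Dick08}. Your write-up is essentially the argument behind those cited propagation rules, so there is no substantive difference in approach---you have merely spelled out what the paper leaves to the references. One small point worth adding for completeness is the sanity check that $t_{\alpha'}=\lceil t\alpha'/\alpha\rceil$ lies in the admissible range $0\le t_{\alpha'}\le \alpha' m$ (which follows since $0\le t\le \alpha m$ and $\alpha' m\in\NN$), and that the standing condition $n\ge \alpha' m$ in Definition~\ref{def:ho_digital_net} is automatic from $n\ge \alpha m$.
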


Dick \cite{Dick07,Dick08} proposed the following digit interlacing composition to obtain explicit construction of higher order digital nets over $\FF_b$: For $m,s,\alpha\in \NN$, let $Q\subset [0,1)^{\alpha s}$ be a digital net over $\FF_b$ with generating matrices $C_1,\ldots,C_{\alpha s}\in \FF_b^{m\times m}$. For $1\le i\le m$ and $1\le j\le \alpha s$, we denote by $\bsc_{i,j}$ the $i$-th row vector of $C_j$. We now construct a digital net $P\subset [0,1)^s$ over $\FF_b$ with generating matrices $D_1,\ldots,D_s\in \FF_b^{\alpha m\times m}$ such that the ($\alpha(h-1)+i$)-th row vector of $D_j$ equals $\bsc_{h,\alpha(j-1)+i}$ for all $1\le h\le m$, $1\le i\le \alpha$ and $1\le j\le s$. Regarding this construction algorithm, we have the following, see for instance \cite[Corollary~3.4]{BDP11}.
\begin{lemma}\label{lem:ho_digital_net}
Let $Q$ be an order 1 digital $(t',m,\alpha s)$-net over $\FF_b$ with $0\le t'\le m$. Then a digital net $P$ constructed as above is an order $\alpha$ digital $(t,m,s)$-net over $\FF_b$ with
\begin{align}\label{eq:order_alpha_t-value}
 t = \alpha \min \left\{m, t'+ \left\lfloor \frac{s(\alpha-1)}{2} \right\rfloor\right\} .
\end{align}
\end{lemma}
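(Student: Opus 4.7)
The plan is to reduce the order-$\alpha$ linear independence condition in Definition~\ref{def:ho_digital_net} applied to $P$ to the order-$1$ condition enjoyed by $Q$. Fix strictly decreasing sequences $1\le i_{j,v_j}<\cdots<i_{j,1}\le \alpha m$ for $j=1,\ldots,s$ with $\sum_{j=1}^{s}\sum_{l=1}^{\min(\alpha,v_j)}i_{j,l}\le \alpha m-t$. Writing each $i_{j,l}=\alpha(h_{j,l}-1)+i'_{j,l}$ uniquely with $h_{j,l}\in\{1,\ldots,m\}$ and $i'_{j,l}\in\{1,\ldots,\alpha\}$, the interlacing construction identifies the row $\boldsymbol{d}_{i_{j,l},j}$ of $D_j$ with the row $\bsc_{h_{j,l},\,\alpha(j-1)+i'_{j,l}}$ of the corresponding generating matrix of $Q$; hence it suffices to verify linear independence of this latter collection of rows via the order-$1$ rule.

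To this end, reorganize the rows by column index of $Q$. For each pair $(j,a)$ with $1\le j\le s$ and $1\le a\le \alpha$, the rows used in dimension $j'=\alpha(j-1)+a$ of $Q$ are $\{\bsc_{h_{j,l},j'}\mid l\text{ with }i'_{j,l}=a\}$, and these $h_{j,l}$ are mutually distinct because the $i_{j,l}$ are. Since $i_{j,l}$ is strictly decreasing in $l$, the $h_{j,l}$ are non-increasing, so the largest row index used in dimension $j'$ is $H_{j,a}:=h_{j,l_a}$ with $l_a:=\min\{l:i'_{j,l}=a\}$. Letting $v_{j,a}:=\#\{l:i'_{j,l}=a\}$, the order-$1$ digital $(t',m,\alpha s)$-net property of $Q$ will then give the desired linear independence once we show
\begin{align*}
\sum_{(j,a):\,v_{j,a}\ge 1}H_{j,a}\le m-t'.
\end{align*}

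The key estimate exploits $H_{j,a}=(i_{j,l_a}+\alpha-a)/\alpha$, which yields
\begin{align*}
\sum_{j,a}H_{j,a}=\frac{1}{\alpha}\sum_{j=1}^{s}\Bigl(\sum_a i_{j,l_a}+\sum_a(\alpha-a)\Bigr).
\end{align*}
Set $w_j:=\min(\alpha,v_j)$ and $r_j:=\#\{a:v_{j,a}\ge 1\}$. Since $r_j\le w_j$ and the $l_a$'s are distinct elements of $\{1,\ldots,v_j\}$, monotonicity of $i_{j,l}$ gives $\sum_a i_{j,l_a}\le\sum_{l=1}^{r_j}i_{j,l}\le\sum_{l=1}^{w_j}i_{j,l}$; and since the used $a$-values are distinct elements of $\{1,\ldots,\alpha\}$, we have $\sum_a(\alpha-a)\le\alpha(\alpha-1)/2$. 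Combining these with the hypothesis gives $\sum_{j,a}H_{j,a}\le(\alpha m-t)/\alpha+s(\alpha-1)/2$. Plugging in $t=\alpha(t'+\lfloor s(\alpha-1)/2\rfloor)$ and using that the left-hand side is an integer to absorb the at-most-$1/2$ slack yields $\sum_{j,a}H_{j,a}\le m-t'$, as required. The remaining case $m\le t'+\lfloor s(\alpha-1)/2\rfloor$ (so $t=\alpha m$) is vacuous, as $\sum i_{j,l}\le 0$ forces the empty selection.

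The main obstacle is the constant in the key estimate: a naive per-index bound $h_{j,l}\le(i_{j,l}+\alpha-1)/\alpha$ costs a factor $\alpha-1$ per row used and yields $t$ of order $\alpha^2 s$, larger than the claimed bound by a factor of roughly $2$. Obtaining the sharp $(\alpha-1)/2$ per dimension requires working with the reorganized sum $\sum_a(\alpha-a)$ over \emph{distinct} $a$-values and bounding it by the partial arithmetic sum on $\{1,\ldots,\alpha\}$; refining the resulting $s(\alpha-1)/2$ to its floor inside the definition of $t$ is possible only because $\sum_{j,a}H_{j,a}$ is an integer, allowing one to absorb the half-integer slack that arises when $s(\alpha-1)$ is odd.
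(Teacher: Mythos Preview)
The paper does not give its own proof of this lemma; it simply states the result and cites \cite[Corollary~3.4]{BDP11}. Your argument is correct and is in fact the standard proof that appears in that reference (and earlier in \cite{Dick07,Dick08}): one unravels the interlacing, groups the selected rows of $D_1,\ldots,D_s$ according to the residue class $a=i'_{j,l}$ so that they become rows of the $\alpha s$ matrices $C_{\alpha(j-1)+a}$, and then checks that the sum of the maximal row indices satisfies the order-$1$ constraint for $Q$. Your bookkeeping---writing $H_{j,a}=(i_{j,l_a}+\alpha-a)/\alpha$, bounding $\sum_a i_{j,l_a}$ by $\sum_{l=1}^{\min(\alpha,v_j)} i_{j,l}$ via monotonicity and distinctness of the $l_a$, and bounding $\sum_a(\alpha-a)$ by $\alpha(\alpha-1)/2$ via distinctness of the $a$'s---is exactly the mechanism that produces the factor $s(\alpha-1)/2$ rather than the cruder $s(\alpha-1)$. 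The final step of absorbing the possible half-integer slack into the floor because $\sum H_{j,a}\in\ZZ$ is also standard. So your proposal matches the literature proof; there is nothing to compare against in the present paper itself.
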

\noindent
Thus in order to obtain an order $\alpha$ digital $(t,m,s)$-net with small $t$-value, we need an order 1 digital $(t',m,\alpha s)$-net with small $t'$-value. Here we recall that there have been many explicit constructions of order $1$ digital sequences (defined below) over $\FF_b$ for arbitrary dimension proposed in the literature, so that we can construct order 1 digital $(t',m,\alpha s)$-nets with small $t'$-value.
\begin{definition}\label{def:digital_seq}
Let $C_1,\ldots,C_s\in \FF_b^{\NN\times \NN}$ be $\NN\times \NN$ matrices over $\FF_b$. For $C_j=(c_{j,k,l})_{k,l\in \NN}$ we assume that there exists a function $K: \NN\to \NN$ such that $c_{j,k,l}=0$ when $k>K(l)$. Let $h$ be a non-negative integer with its $b$-adic expansion $h=\sum_{i=0}^{a-1}\eta_i b^i$ for some $a\in \NN$. For $1\le j\le s$, let us consider
\begin{align*}
 x_{h,j} = \frac{\xi_{1,h,j}}{b}+\frac{\xi_{2,h,j}}{b^2}+\cdots ,
\end{align*}
where $\xi_{1,h,j},\xi_{2,h,j},\ldots$ are given by
\begin{align*}
 (\xi_{1,h,j},\xi_{2,h,j},\ldots)^{\top} = C_j (\eta_0,\eta_1,\ldots,\eta_{a-1},0,0,\ldots)^{\top}.
\end{align*}
The sequence $S=(\bsx_0,\bsx_1,\ldots)$ with $\bsx_h=(x_{h,1},\ldots,x_{h,s})$ is called an digital sequence over $\FF_b$ (with generating matrices $C_1,\ldots,C_s$).

Moreover, let $t$ be a non-negative integer. For $m\in \NN$, let $C_{j,m\times m}$ be the the left upper $m\times m$ sub-matrix of $C_j$. If for all $m>t$ the matrices $C_{1,m\times m},\ldots,C_{s,m\times m}$ generate an order 1 digital $(t,m,s)$-net over $\FF_b$, we call $S$ an order 1 digital $(t,s)$-sequence over $\FF_b$.
\end{definition}
\begin{remark}\label{rem:ho_digital_net}
As mentioned above, there are many explicit constructions of order $1$ digital sequences over $\FF_b$, see for instance \cite{Faure82,Nied88,NXbook,Sobol67}. We refer to \cite[Chapter~6]{DPbook} for more information on this topic. For $\alpha,s\in \NN$, let $S$ be an order 1 digital $(t',\alpha s)$-sequence over $\FF_b$ with generating matrices $C_1,\ldots,C_{\alpha s}$ for some non-negative integer $t'$. Now let us define $m_0:=t'+ \lfloor s(\alpha-1)/2 \rfloor$. When $m\ge m_0$, by using the result of Lemma~\ref{lem:ho_digital_net}, we see that the digital net $P\subset [0,1)^s$ constructed by the digit interlacing composition based on $C_{1,m\times m},\ldots,C_{\alpha s,m\times m}$ becomes an order $\alpha$ digital $(\alpha m_0, m, s)$-net. Here the value $\alpha m_0$ does not depend on $m$.
\end{remark}

\section{Proof of Theorem~\ref{thm:main_result}}\label{sec:upper}
Throughout this section, let $P$ be an order $\beta$ digital net over $\FF_b$ for $\beta \in \NN$. Here we prove Theorem~\ref{thm:main_result}, i.e., an upper bound on the RMS worst-case error of QMC integration over $P\oplus \bssigma$ in $\Hcal_{\alpha,\bsgamma}$ with respect to a randomly chosen $\bssigma\in [0,1)^s$ when $\beta\ge 2\alpha$. 

\subsection{Interpolation of Dick metric functions}
In this subsection, we discuss an interpolation property of Dick metric functions, which shall become a crucial tool in the proof of an upper bound on the RMS worst-case error.
\begin{lemma}\label{lem:mertic_inter}
Let $\alpha,\beta\in \NN$ with $1< \alpha \le \beta$. For any $\bsk\in \NN_0^s$, it holds that
\begin{align*}
 \mu_{\alpha}(\bsk) \ge \frac{\alpha-1}{\beta-1} \mu_{\beta}(\bsk)+\frac{\beta-\alpha}{\beta-1} \mu_1(\bsk) .
\end{align*}
\end{lemma}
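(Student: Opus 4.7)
\medskip

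\noindent\textbf{Proof plan.} Since both $\mu_\alpha(\bsk)$, $\mu_\beta(\bsk)$, $\mu_1(\bsk)$ are defined as sums over coordinates and the claimed inequality is linear in each side, the statement reduces immediately to the one-dimensional case $s=1$: it suffices to show that for every $k\in\NN_0$,
\begin{equation*}
(\beta-1)\,\mu_\alpha(k)\;\ge\;(\alpha-1)\,\mu_\beta(k)\;+\;(\beta-\alpha)\,\mu_1(k).
\end{equation*}
When $k=0$ every term vanishes, so I assume $k\ge 1$ and write its $b$-adic expansion as $k=\kappa_1 b^{c_1-1}+\dots+\kappa_v b^{c_v-1}$ with $\kappa_i\in\{1,\dots,b-1\}$ and $c_1>c_2>\dots>c_v>0$. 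Then $\mu_1(k)=c_1$, $\mu_\alpha(k)=\sum_{i=1}^{\min(\alpha,v)}c_i$, and $\mu_\beta(k)=\sum_{i=1}^{\min(\beta,v)}c_i$.

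I would next split into three cases according to the size of $v$. If $v\le\alpha$, then $\mu_\alpha(k)=\mu_\beta(k)=c_1+\dots+c_v$, and after multiplying out the inequality collapses to $(\beta-\alpha)(c_2+\dots+c_v)\ge 0$, which is immediate. In the two remaining cases, $\alpha<v\le\beta$ and $v>\beta$, I would expand $(\beta-1)\mu_\alpha(k)-(\alpha-1)\mu_\beta(k)-(\beta-\alpha)\mu_1(k)$ and collect the coefficient of each $c_i$. A quick check shows that the coefficient of $c_1$ is $(\beta-1)-(\alpha-1)-(\beta-\alpha)=0$, the coefficient of $c_i$ for $2\le i\le\alpha$ equals $\beta-\alpha$, and the coefficient of $c_i$ for $\alpha<i\le \min(v,\beta)$ equals $-(\alpha-1)$. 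Consequently the target reduces in both cases to showing
\begin{equation*}
(\beta-\alpha)\sum_{i=2}^{\alpha}c_i\;\ge\;(\alpha-1)\sum_{i=\alpha+1}^{\min(v,\beta)}c_i.
\end{equation*}

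To finish, I would use the strict integer decrease $c_1>c_2>\dots>c_v\ge 1$ in the form $c_\alpha>c_{\alpha+1}$, together with a term-counting bound: the left-hand sum has $\alpha-1$ summands, each at least $c_\alpha$, giving a lower bound $(\beta-\alpha)(\alpha-1)c_\alpha$; the right-hand sum has $\min(v,\beta)-\alpha\le\beta-\alpha$ summands, each at most $c_{\alpha+1}$, giving an upper bound $(\alpha-1)(\beta-\alpha)c_{\alpha+1}$. Since $c_\alpha>c_{\alpha+1}$, the required inequality follows.

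I do not anticipate a serious obstacle here; the argument is essentially bookkeeping once the cases are identified. The only subtlety is recognizing that the coefficient of $c_1$ vanishes exactly, so that the problem reduces to comparing the "middle" block of indices (contributing with the positive weight $\beta-\alpha$) against the "tail" block (contributing with the negative weight $\alpha-1$); the two key ingredients that make this comparison work are the bound $\min(v,\beta)-\alpha\le\beta-\alpha$ on the number of tail terms, and the separation $c_\alpha>c_{\alpha+1}$ between the smallest middle term and the largest tail term.
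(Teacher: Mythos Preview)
Your proof is correct and follows essentially the same route as the paper: both reduce to the one-dimensional inequality and compare the ``middle'' block $\sum_{i=2}^{\alpha}c_i$ against the ``tail'' block $\sum_{i=\alpha+1}^{\min(v,\beta)}c_i$ by pivoting on $c_\alpha$. The only cosmetic difference is that the paper avoids your three-way case split by padding $c_{v+1}=\cdots=c_\beta=0$ when $v<\beta$ and then writing the two clean bounds $\mu_\alpha(k)-\mu_1(k)\ge(\alpha-1)c_\alpha$ and $\mu_\beta(k)-\mu_\alpha(k)\le(\beta-\alpha)c_\alpha$, which combine directly; your coefficient bookkeeping reaches the identical comparison (and in fact $c_\alpha\ge c_{\alpha+1}$ already suffices, so the strictness you invoke is not needed).
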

\noindent For convenience, we shall in what follows write
\begin{align*}
 A_{\alpha\beta}= \frac{\alpha-1}{\beta-1}\quad \text{and}\quad B_{\alpha\beta}= \frac{\beta-\alpha}{\beta-1}  .
\end{align*}

\begin{proof}
Since $\mu_{\alpha}(\bsk)=\sum_{j=1}^{s}\mu_{\alpha}(k_j)$ for any $\alpha\in \NN$ and $\bsk=(k_1,\ldots,k_s)\in \NN_0^s$, it suffices to prove that the inequality
\begin{align*}
 \mu_{\alpha}(k) \ge A_{\alpha\beta}\mu_{\beta}(k)+B_{\alpha\beta}\mu_1(k) ,
\end{align*}
holds for any $k\in \NN_0$. As the result is trivial for $k=0$, we only consider the case $k\ge 1$ in the following. Let us denote the $b$-adic expansion of $k$ by $k=\kappa_1 b^{c_1-1}+\kappa_2 b^{c_2-1}+\cdots+\kappa_v b^{c_v-1}$ for some $v\ge 1$ such that $\kappa_1,\ldots,\kappa_v\in \{1,\ldots,b-1\}$ and $c_1>\cdots>c_v>0$. When $\beta > v$, we write $c_{v+1}=c_{v+2}=\cdots = c_{\beta}=0$. Then, we have
\begin{align*}
 \mu_{\alpha}(k) = \sum_{i=1}^{\alpha}c_i \ge c_1 + \sum_{i=2}^{\alpha}c_{\alpha} = \mu_1(k)+ (\alpha -1)c_{\alpha} ,
\end{align*}
as well as
\begin{align*}
 \mu_{\beta}(k) = \sum_{i=1}^{\beta}c_i \le \sum_{i=1}^{\alpha}c_i + \sum_{i=\alpha+1}^{\beta}c_{\alpha}=\mu_{\alpha}(k)+(\beta -\alpha)c_{\alpha}.
\end{align*}
By using the above two inequalities, we obtain
\begin{align*}
\frac{\mu_{\beta}(k)-\mu_{\alpha}(k)}{\beta -\alpha} \le c_{\alpha}\le \frac{\mu_{\alpha}(k)-\mu_1(k)}{\alpha -1} ,
\end{align*}
from which we can easily see that the result follows.
\end{proof}

\begin{remark}\label{rem:mertic_inter}
In the proof of the upper bound on the RMS worst-case error, which shall be given in the next subsection, the condition $B_{\alpha\beta}>1/2$ is necessary. This condition can be satisfied if and only if $\beta \ge 2\alpha$. This is why we assume $\beta \ge 2\alpha$ in Theorem~\ref{thm:main_result} and Corollary~\ref{cor:existence}.
\end{remark}

\subsection{Upper bound on the RMS worst-case error}
Finally, we prove an upper bound on the RMS worst-case error of QMC integration over $P\oplus \bssigma$ in $\Hcal_{\alpha,\bsgamma}$ with respect to a randomly chosen $\bssigma\in [0,1)^s$. The following lemma stems from the proof of \cite[Theorem~30]{BD09}.
\begin{lemma}\label{lem:mse_Baldeaux_Dick}
Let $P$ be a digital net over $\FF_b$ and $P^{\perp}$ its dual net. For $u\subseteq \{1:s\}$, we write $P_u^{\perp}=\{\bsk_u\in \NN^{|u|}: (\bsk_u,\bszero)\in P^{\perp}\}$. The mean square worst-case error of QMC integration over $P\oplus \bssigma$ in $\Hcal_{\alpha,\bsgamma}$ with respect to a randomly chosen $\bssigma\in [0,1)^s$ is bounded by
\begin{align*}
 \left(e^{\rms\text{--}\wor}(\Hcal_{\alpha,\bsgamma};P)\right)^2 \le \sum_{\emptyset \ne u\subseteq \{1:s\}}\gamma_u D_{\alpha,b}^{|u|}\sum_{\bsk_u\in P_u^{\perp}}b^{-2\mu_{\alpha}(\bsk_u)} ,
\end{align*}
where we simply write $\mu_{\alpha}(\bsk_u):=\sum_{j\in u}\mu_{\alpha}(k_j)$ for $\emptyset \ne u\subseteq \{1:s\}$ and $\bsk_u\in \NN^{|u|}$, and $D_{\alpha,b}>0$ depends only on $\alpha$ and $b$ and is given by
\begin{align*}
 D_{\alpha,b} = \max_{1\leq v\leq \alpha}\left\{ \sum_{\tau=v}^{\alpha}\frac{(C_{\tau,b})^2}{b^{2(\tau-v)}}+\frac{2C_{2\alpha, b}}{b^{2(\alpha-v)}}\right\},
\end{align*}
with
\begin{align*}
 C_{1,b}=\frac{1}{2\sin(\tau/b)} \quad \text{and} \quad C_{\tau,b}=\frac{(1+1/b+1/(b(b+1)))^{\tau-2}}{(2\sin(\tau/b))^{\tau}}\quad \text{for $\tau\geq 2$}.
\end{align*}
\end{lemma}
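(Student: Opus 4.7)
The plan is to carry out the standard reproducing-kernel plus Walsh-series analysis for the RMS worst-case error, adapting the argument of \cite[Theorem~30]{BD09} to the present weighted multivariate setting. Since $\Hcal_{\alpha,\bsgamma}$ has reproducing kernel $\Kcal_{\alpha,\bsgamma}$, for any $N$-element multiset $Q\subset[0,1)^s$ the squared worst-case error has the familiar closed form
\begin{align*}
(e^{\wor}(\Hcal_{\alpha,\bsgamma};Q))^2 & = \int_{[0,1)^{2s}}\Kcal_{\alpha,\bsgamma}(\bsx,\bsy)\,\rd\bsx\,\rd\bsy - \frac{2}{N}\sum_{\bsx\in Q}\int_{[0,1)^s}\Kcal_{\alpha,\bsgamma}(\bsx,\bsy)\,\rd\bsy \\
& \quad + \frac{1}{N^2}\sum_{\bsx,\bsy\in Q}\Kcal_{\alpha,\bsgamma}(\bsx,\bsy) .
\end{align*}
Setting $Q=P\oplus\bssigma$ and integrating over $\bssigma\in[0,1)^s$ produces $(e^{\rms\text{--}\wor}(\Hcal_{\alpha,\bsgamma};P))^2$.

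Next I would expand the one-dimensional factor $K_\alpha^{(1)}(x,y) := \sum_{r=1}^\alpha B_r(x)B_r(y)/(r!)^2 + (-1)^{\alpha+1} B_{2\alpha}(|x-y|)/(2\alpha)!$ into its two-variable Walsh series; via the subset decomposition $\Kcal_{\alpha,\bsgamma}=\sum_u \gamma_u\prod_{j\in u}K_\alpha^{(1)}(x_j,y_j)$ this lifts to a multivariate Walsh expansion of $\Kcal_{\alpha,\bsgamma}$. Using $\wal_{\bsk}(\bsx\oplus\bssigma)=\wal_{\bsk}(\bsx)\wal_{\bsk}(\bssigma)$ together with orthonormality of the Walsh system, the integral over $\bssigma$ annihilates all off-diagonal modes $(\bsk,\bsl)$ with $\bsk\ne\bsl$, leaving only diagonal contributions. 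The digital-net character identity $\sum_{\bsx\in P}\wal_{\bsk}(\bsx)=N\,\mathbf{1}[\bsk\in P^\perp]$ then reduces the third (quadratic) term to $\sum_{\bsk\in P^\perp}a_{\bsk,\bsk}$, where $a_{\bsk,\bsk}$ denotes the $(\bsk,\bsk)$-Walsh coefficient of $\Kcal_{\alpha,\bsgamma}$. The $\bsk=\bszero$ contribution $a_{\bszero,\bszero}=\int_{[0,1)^{2s}}\Kcal_{\alpha,\bsgamma}\,\rd\bsx\,\rd\bsy$ exactly cancels the combined averaged first two (shift-invariant) terms, so the three terms collapse to $\sum_{\bsk\in P^\perp\setminus\{\bszero\}}a_{\bsk,\bsk}$.

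Now I would unpack $a_{\bsk,\bsk}$. Since $\int_0^1 B_r(x)\,\rd x=0$ for $r\ge 1$ and a short Bernoulli-polynomial calculation yields $\int_0^1\!\int_0^1 B_{2\alpha}(|x-y|)\,\rd x\,\rd y=0$, one has $\hat{K}_\alpha^{(1)}(0,0)=0$. Consequently, the $u$-term in the subset decomposition contributes to $a_{\bsk,\bsk}$ only when $u$ equals the support $u_{\bsk}:=\{j:k_j\ne 0\}$, giving $a_{\bsk,\bsk}=\gamma_{u_{\bsk}}\prod_{j\in u_{\bsk}}\hat{K}_\alpha^{(1)}(k_j,k_j)$. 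Reorganising the sum over $\bsk$ by $u_{\bsk}=u$ and writing $\bsk=(\bsk_u,\bszero)$ with $\bsk_u\in P_u^\perp$ yields
\begin{align*}
(e^{\rms\text{--}\wor}(\Hcal_{\alpha,\bsgamma};P))^2 &= \sum_{\emptyset\ne u\subseteq\{1:s\}}\gamma_u\sum_{\bsk_u\in P_u^\perp}\prod_{j\in u}\hat{K}_\alpha^{(1)}(k_j,k_j) .
\end{align*}
Taking absolute values inside (permitted because $\gamma_u\ge 0$) and applying the one-dimensional Walsh coefficient bound $|\hat{K}_\alpha^{(1)}(k,k)|\le D_{\alpha,b}\,b^{-2\mu_\alpha(k)}$ for every $k\ge 1$ factorises to $D_{\alpha,b}^{|u|}b^{-2\mu_\alpha(\bsk_u)}$ and delivers the stated bound.

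The main obstacle is that final Walsh coefficient estimate. The polynomial piece $\sum_{r=1}^\alpha B_r(x)B_r(y)/(r!)^2$ and the shift-type piece $B_{2\alpha}(|x-y|)/(2\alpha)!$ have distinct Walsh decay profiles, and bounding their sum uniformly in $k$ by $D_{\alpha,b}\,b^{-2\mu_\alpha(k)}$ forces one to split according to how many of the $\alpha$ leading $b$-adic digits of $k$ are active, which is precisely what produces the $\max_{1\le v\le\alpha}$ structure in $D_{\alpha,b}$. This is the content of Dick's Walsh coefficient bounds for Bernoulli polynomials \cite{Dick07,Dick08}; since the lemma is explicitly attributed to the proof of \cite[Theorem~30]{BD09}, I would invoke those bounds directly rather than re-derive them.
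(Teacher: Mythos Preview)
The paper does not supply its own proof of this lemma; it simply records the statement and attributes it to the proof of \cite[Theorem~30]{BD09}. Your proposal is a correct and faithful sketch of precisely that argument: the reproducing-kernel expression for the squared worst-case error, the Walsh expansion of the product kernel, the digital-shift averaging that kills off-diagonal modes, the dual-net character sum, the cancellation of the $\bsk=\bszero$ term against the first two averaged terms, the reorganisation by support (using $\hat K_\alpha^{(1)}(0,0)=0$), and finally the invocation of Dick's Walsh-coefficient bound $|\hat K_\alpha^{(1)}(k,k)|\le D_{\alpha,b}\,b^{-2\mu_\alpha(k)}$. There is nothing to compare against beyond the citation, and your outline matches the cited source.
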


In the subsequent analysis, we shall use the following inequality, see \cite[Lemma~13.24]{DPbook} for its proof.
\begin{lemma}\label{lem:binom_sum}
For any real number $b>1$ and any $k,t_0\in \NN$, we have
\begin{align*}
 \sum_{t=t_0}^{\infty}b^{-t}\binom {t+k-1}{k-1}\le b^{-t_0}\binom {t_0+k-1}{k-1}\left( 1-\frac{1}{b}\right)^{-k}.
\end{align*}
\end{lemma}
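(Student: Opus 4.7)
The plan is to reduce this binomial-tail inequality to the classical generating-function identity $\sum_{s=0}^{\infty} x^s\binom{s+k-1}{k-1} = (1-x)^{-k}$ (valid for $|x|<1$) by means of a pointwise factorisation of the shifted binomial coefficient. First I would shift the summation index by setting $s = t - t_0$, rewriting the left-hand side exactly as
\begin{align*}
\sum_{t=t_0}^{\infty} b^{-t}\binom{t+k-1}{k-1} = b^{-t_0}\sum_{s=0}^{\infty} b^{-s}\binom{s+t_0+k-1}{k-1}.
\end{align*}
The prefactor $b^{-t_0}$ already matches the right-hand side of the claim, so it remains to show that $\sum_{s=0}^{\infty} b^{-s}\binom{s+t_0+k-1}{k-1} \le \binom{t_0+k-1}{k-1}(1-1/b)^{-k}$.

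The crux of the argument is the factorised inequality
\begin{align*}
\binom{s+t_0+k-1}{k-1} \le \binom{t_0+k-1}{k-1}\binom{s+k-1}{k-1},
\end{align*}
which cleanly separates the $s$-dependence from the $t_0$-dependence. To establish this I would expand the ratio as a finite product,
\begin{align*}
\frac{\binom{s+t_0+k-1}{k-1}}{\binom{t_0+k-1}{k-1}} = \prod_{i=1}^{k-1}\frac{s+t_0+i}{t_0+i} = \prod_{i=1}^{k-1}\left(1 + \frac{s}{t_0+i}\right),
\end{align*}
and then dominate each factor using $t_0 + i \ge i$, which yields $1 + s/(t_0+i) \le 1 + s/i = (s+i)/i$. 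Taking the product of these factor-wise bounds gives exactly $\prod_{i=1}^{k-1}(s+i)/i = \binom{s+k-1}{k-1}$, as required. Note that equality holds when $t_0 = 0$, and the empty-product case $k=1$ is handled trivially.

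Finally, inserting this pointwise bound into the reindexed sum factors $\binom{t_0+k-1}{k-1}$ out of the summation, leaving the residual series $\sum_{s=0}^{\infty} b^{-s}\binom{s+k-1}{k-1}$, which by the negative-binomial generating function evaluated at $x=1/b$ equals $(1-1/b)^{-k}$. Multiplying back by the $b^{-t_0}$ prefactor produces exactly the bound claimed in the lemma. I do not foresee any real obstacle in this route: the entire argument hinges on the elementary comparison $(t_0+i)^{-1}\le i^{-1}$, while every other step is either a routine reindexing or a standard power-series identity.
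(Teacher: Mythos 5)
Your proof is correct: the reindexing, the factorisation $\binom{s+t_0+k-1}{k-1}\le\binom{t_0+k-1}{k-1}\binom{s+k-1}{k-1}$ via the term-by-term bound $1+s/(t_0+i)\le(s+i)/i$, and the negative binomial series at $x=1/b<1$ together give exactly the claimed bound. Note that the paper itself offers no proof of this lemma --- it simply cites Lemma~13.24 of the Dick--Pillichshammer book --- and your argument is the standard one used there, so you have in effect supplied the missing self-contained proof rather than a genuinely different route. (One trivial nit: your remark that equality holds when $t_0=0$ refers to a value outside the stated range $t_0\in\NN$, but this does not affect the argument.)
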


Now we are ready to prove Theorem~\ref{thm:main_result}.

\begin{proof}[Proof of Theorem~\ref{thm:main_result}]
Using Lemmas~\ref{lem:mertic_inter} and \ref{lem:mse_Baldeaux_Dick}, we have
\begin{align*}
 \left(e^{\rms\text{--}\wor}(\Hcal_{\alpha,\bsgamma};P)\right)^2 & \le \sum_{\emptyset \ne u\subseteq \{1:s\}}\gamma_u D_{\alpha,b}^{|u|} \sum_{\bsk_u\in P_u^{\perp}}b^{-2A_{\alpha\beta}\mu_{\beta}(\bsk_u)-2B_{\alpha\beta}\mu_1(\bsk_u)} \\
 & \le \sum_{\emptyset \ne u\subseteq \{1:s\}}\gamma_u D_{\alpha,b}^{|u|} \sum_{\bsk_u\in P_u^{\perp}}b^{-2A_{\alpha\beta}\delta_\beta(P)-2B_{\alpha\beta}\mu_1(\bsk_u)} \\
 & = \sum_{\emptyset \ne u\subseteq \{1:s\}}\gamma_u D_{\alpha,b}^{|u|}b^{-2A_{\alpha\beta}\delta_\beta(P)}W_u^{1,2B_{\alpha\beta}}(P) ,
\end{align*}
where $\delta_\beta(P)$ is defined as in Definition~\ref{def:minimum_Dick_metric} and we write
\begin{align*}
 W_u^{1,2B_{\alpha\beta}}(P) := \sum_{\bsk_u\in P_u^{\perp}}b^{-2B_{\alpha\beta}\mu_1(\bsk_u)} ,
\end{align*}
for $\emptyset \ne u\subseteq \{1:s\}$. Since $\beta\geq 2\alpha$, we have $B_{\alpha\beta}>1/2$ as stated in Remark~\ref{rem:mertic_inter}. In the following we focus on the term $W_u^{1,2B_{\alpha\beta}}(P)$. Since $\mu_1(\bsk_u)$ is an integer no less than both $|u|$ and $\delta_1(P)$ for any $\bsk_u\in \NN^{|u|}$, we have
\begin{align*}
 W_u^{1,2B_{\alpha\beta}}(P) & = \sum_{h=\max\{\delta_1(P),|u|\}}^{\infty}\sum_{\substack{\bsk_u\in P_u^{\perp}\\ \mu_1(\bsk_u)=h}}b^{-2B_{\alpha\beta}h} \\
 & = \sum_{h=\max\{\delta_1(P),|u|\}}^{\infty}b^{-2B_{\alpha\beta}h}\sum_{\substack{\bsk_u\in P_u^{\perp}\\ \mu_1(\bsk_u)=h}}1 \\
 & = \sum_{h=\max\{\delta_1(P),|u|\}}^{\infty}b^{-2B_{\alpha\beta}h}\sum_{\substack{\bsl_u\in \NN^{|u|}\\ |\bsl_u|_1=h}}\sum_{\substack{\bsk_u\in P_u^{\perp}\\ \mu_1(k_j)=l_j, \forall j\in u}}1,
\end{align*}
where we denote $|\bsl_u|_1=\sum_{j\in u}l_j$. For the innermost sum in the last expression, it is known from \cite[Lemma~13.8]{DPbook}\footnote{Although \cite[Lemma~13.8]{DPbook} only consider the case where $P$ is a digital net over $\FF_b$ with generating matrices of size $m\times m$, the proof still goes through even when $P$ is a digital net over $\FF_b$ with generating matrices of size $n\times m$ as long as $n\ge m$.} that we have
\begin{align*}
 \sum_{\substack{\bsk_u\in P_u^{\perp}\\ \mu_1(k_j)=l_j, \forall j\in u}}1 \le \begin{cases}
 0 & \text{if $|\bsl_u|_1< \delta_1(P)$,} \\
 (b-1)^{|u|} & \text{if $\delta_1(P)\le |\bsl_u|_1< \delta_1(P)+|u|$,} \\
 (b-1)^{|u|}b^{|\bsl_u|_1-(\delta_1(P)+|u|-1)} & \text{if $|\bsl_u|_1\ge \delta_1(P)+|u|$.}
 \end{cases}
\end{align*}
Thus $W_u^{1,2B_{\alpha\beta}}(P)$ can be bounded by
\begin{align}\label{eq:W_u_bound}
 W_u^{1,2B_{\alpha\beta}}(P) & \le \sum_{h=\max\{\delta_1(P),|u|\}}^{\delta_1(P)+|u|-1}b^{-2B_{\alpha\beta}h}\sum_{\substack{\bsl_u\in \NN^{|u|}\\ |\bsl_u|_1=h}}(b-1)^{|u|} \nonumber \\
 & \qquad + \sum_{h=\delta_1(P)+|u|}^{\infty}b^{-2B_{\alpha\beta}h}\sum_{\substack{\bsl_u\in \NN^{|u|}\\ |\bsl_u|_1=h}}(b-1)^{|u|}b^{|\bsl_u|_1-(\delta_1(P)+|u|-1)} \nonumber \\
 & = (b-1)^{|u|}\Biggl[ \sum_{h=\max\{\delta_1(P),|u|\}}^{\delta_1(P)+|u|-1}b^{-2B_{\alpha\beta}h}\binom {h-1}{|u|-1} \nonumber \\
 & \qquad + b^{-(\delta_1(P)+|u|-1)}\sum_{h=\delta_1(P)+|u|}^{\infty}b^{-(2B_{\alpha\beta}-1)h}\binom {h-1}{|u|-1}\Biggr].
\end{align}
For the second sum in (\ref{eq:W_u_bound}), we have
\begin{align*}
 & \quad \sum_{h=\delta_1(P)+|u|}^{\infty}b^{-(2B_{\alpha\beta}-1)h}\binom {h-1}{|u|-1} \\
 & = \sum_{h=\delta_1(P)}^{\infty}b^{-(2B_{\alpha\beta}-1)(h+|u|)}\binom {h+|u|-1}{|u|-1} \\
 & \le b^{-(2B_{\alpha\beta}-1)(\delta_1(P)+|u|)}\binom {\delta_1(P)+|u|-1}{|u|-1}\left( 1-b^{-(2B_{\alpha\beta}-1)}\right)^{-|u|} \\
 & \le \left(\frac{1}{b^{2B_{\alpha\beta}-1}-1}\right)^{|u|}\frac{(\delta_1(P)+1)^{|u|-1}}{b^{(2B_{\alpha\beta}-1)\delta_1(P)}},
\end{align*}
where we used Lemma~\ref{lem:binom_sum} in the first inequality as we have $B_{\alpha\beta}>1/2$ by the assumption $\beta\ge 2\alpha$, and the second inequality stems from the inequality
\begin{align*}
 \binom {\delta_1(P)+|u|-1}{|u|-1} =\prod_{i=1}^{|u|-1}\frac{\delta_1(P)+|u|-i}{|u|-i}\le (\delta_1(P)+1)^{|u|-1}.
\end{align*}
For the first sum in (\ref{eq:W_u_bound}), we have
\begin{align*}
 & \quad \sum_{h=\max\{\delta_1(P),|u|\}}^{\delta_1(P)+|u|-1}b^{-2B_{\alpha\beta}h}\binom {h-1}{|u|-1} \\
 & \le \sum_{h=\max\{\delta_1(P),|u|\}}^{\infty}b^{-2B_{\alpha\beta}h}\binom {h-1}{|u|-1} \\
 & = \sum_{h=\max\{\delta_1(P)-|u|,0 \}}^{\infty}b^{-2B_{\alpha\beta}(h+|u|)}\binom {h+|u|-1}{|u|-1} \\
 & \le b^{-2B_{\alpha\beta}\max\{\delta_1(P),|u|\}}\binom {\max\{\delta_1(P),|u|\}-1}{|u|-1}\left( 1-b^{-2B_{\alpha\beta}}\right)^{-|u|} ,
\end{align*}
where we used Lemma~\ref{lem:binom_sum} again in the last inequality. 

Now let us consider the case $\delta_1(P)\ge |u|$. The first sum in (\ref{eq:W_u_bound}) is bounded by
\begin{align*}
 \sum_{h=\max\{\delta_1(P),|u|\}}^{\delta_1(P)+|u|-1}b^{-2B_{\alpha\beta}h}\binom {h-1}{|u|-1} & \le b^{-2B_{\alpha\beta}\delta_1(P)}\binom {\delta_1(P)-1}{|u|-1}\left( 1-b^{-2B_{\alpha\beta}}\right)^{-|u|} \\
 & \le \left( \frac{b^{2B_{\alpha\beta}}}{b^{2B_{\alpha\beta}}-1}\right)^{|u|}\frac{(\delta_1(P)-1)^{|u|-1}}{b^{2B_{\alpha\beta}\delta_1(P)}} \\
 & \le \left( \frac{b^{2B_{\alpha\beta}}}{b^{2B_{\alpha\beta}}-1}\right)^{|u|}\frac{(\delta_1(P)+1)^{|u|-1}}{b^{2B_{\alpha\beta}\delta_1(P)}}.
\end{align*}
For the case $\delta_1(P)< |u|$, the first sum in (\ref{eq:W_u_bound}) is bounded by
\begin{align*}
 \sum_{h=\max\{\delta_1(P),|u|\}}^{\delta_1(P)+|u|-1}b^{-2B_{\alpha\beta}h}\binom {h-1}{|u|-1} & \le b^{-2B_{\alpha\beta}|u|}\left( 1-b^{-2B_{\alpha\beta}}\right)^{-|u|} \\
 & \le \left(\frac{b^{2B_{\alpha\beta}}}{b^{2B_{\alpha\beta}}-1}\right)^{|u|}\frac{(\delta_1(P)+1)^{|u|-1}}{b^{2B_{\alpha\beta}\delta_1(P)}}.
\end{align*}
Thus, regardless of whether $\delta_1(P)\ge |u|$ or $\delta_1(P)< |u|$, we have the bound on the first sum in (\ref{eq:W_u_bound}) as
\begin{align*}
 \sum_{h=\max\{\delta_1(P),|u|\}}^{\delta_1(P)+|u|-1}b^{-2B_{\alpha\beta}h}\binom {h-1}{|u|-1} \le \left(\frac{b^{2B_{\alpha\beta}}}{b^{2B_{\alpha\beta}}-1}\right)^{|u|}\frac{(\delta_1(P)+1)^{|u|-1}}{b^{2B_{\alpha\beta}\delta_1(P)}}.
\end{align*}
Combining this result with the bound on the second sum, we have
\begin{align*}
 W_u^{1,2B_{\alpha\beta}}(P)
 & \le (b-1)^{|u|}\Biggl[ \left(\frac{b^{2B_{\alpha\beta}}}{b^{2B_{\alpha\beta}}-1}\right)^{|u|}\frac{(\delta_1(P)+1)^{|u|-1}}{b^{2B_{\alpha\beta}\delta_1(P)}} \\
 & \qquad + b^{-(\delta_1(P)+|u|-1)}\left(\frac{1}{b^{2B_{\alpha\beta}-1}-1}\right)^{|u|}\frac{(\delta_1(P)+1)^{|u|-1}}{b^{(2B_{\alpha\beta}-1)\delta_1(P)}}\Biggr] \\
 & = G_{\alpha,\beta,b,u}\frac{(\delta_1(P)+1)^{|u|-1}}{b^{2B_{\alpha\beta}\delta_1(P)}} ,
\end{align*}
where we set 
\begin{align*}
 G_{\alpha,\beta,b,u} = (b-1)^{|u|}\left[ \left(\frac{b^{2B_{\alpha\beta}}}{b^{2B_{\alpha\beta}}-1}\right)^{|u|}+b\left( \frac{1}{b^{2B_{\alpha\beta}}-b}\right)^{|u|}\right].
\end{align*}
So far we have obtained
\begin{align*}
 & \quad \left(e^{\rms\text{--}\wor}(\Hcal_{\alpha,\bsgamma};P)\right)^2 \\
 & \le \sum_{\emptyset \ne u\subseteq \{1:s\}}\gamma_u D_{\alpha,b}^{|u|}b^{-2A_{\alpha\beta}\delta_\beta(P)}W_u^{1,2B_{\alpha\beta}}(P) \\
 & \le \frac{1}{b^{2A_{\alpha\beta}\delta_\beta(P)+2B_{\alpha\beta}\delta_1(P)}}\sum_{\emptyset \ne u\subseteq \{1:s\}}\gamma_u D_{\alpha,b}^{|u|}G_{\alpha,\beta,b,u}(\delta_1(P)+1)^{|u|-1}.
\end{align*}

Finally let us recall that $P$ is an order $\beta$ digital $(t,m,s)$-net over $\FF_b$. From this fact and Lemma~\ref{lem:propagation}, we have
\begin{align*}
 \delta_1(P) > m-t_1\quad \text{and}\quad \delta_\beta(P) > \beta m-t ,
\end{align*}
where $t_1=\lceil t/\beta\rceil$. Thus, we have
\begin{align*}
 2A_{\alpha\beta}\delta_\beta(P)+2B_{\alpha\beta}\delta_1(P) & > 2A_{\alpha\beta}(\beta m-t) + 2B_{\alpha\beta}(m-t_1) \\
 & = (2\beta A_{\alpha\beta}+2B_{\alpha\beta})m - 2A_{\alpha\beta}t - 2B_{\alpha\beta}t_1 .
\end{align*}
In the above, it holds that
\begin{align*}
 2\beta A_{\alpha\beta}+2B_{\alpha\beta} =2\alpha.
\end{align*}
Since $t_1=0$ is best possible, we have $\delta_1(P)\le m+1$. Therefore, we get
\begin{align*}
 \left(e^{\rms\text{--}\wor}(\Hcal_{\alpha,\bsgamma};P)\right)^2 & \leq \frac{b^{2A_{\alpha\beta}t + 2B_{\alpha\beta}t_1}}{b^{2\alpha m}}\sum_{\emptyset \ne u\subseteq \{1:s\}}\gamma_u D_{\alpha,b}^{|u|}G_{\alpha,\beta,b,u}(m+2)^{|u|-1} \\
 & \leq \frac{b^{2A_{\alpha\beta}t + 2B_{\alpha\beta}t_1}}{b^{2\alpha m}}\sum_{\emptyset \ne u\subseteq \{1:s\}}\gamma_u D_{\alpha,b}^{|u|}G_{\alpha,\beta,b,u}(3m)^{|u|-1} ,
\end{align*}
which completes the proof by using the inequality $(\sum_i a_i)^{1/2}\leq \sum_i a_i^{1/2}$ for $a_i\geq 0$ and by choosing 
\begin{align}\label{eq:constant_full}
C_{\alpha,\beta,b,u,t} = b^{A_{\alpha\beta}t + B_{\alpha\beta}t_1}D_{\alpha,b}^{|u|/2}G_{\alpha,\beta,b,u}^{1/2}\left( \frac{3}{\log b}\right)^{(|u|-1)/2}
\end{align}
for all $\emptyset \ne u\subseteq \{1:s\}$ such that the bound (\ref{eq:main_result}) holds.
\end{proof}


\end{document}